\documentclass[12pt]{amsart}
\usepackage{amsmath}
\usepackage{amsxtra}
\usepackage{amscd}
\usepackage{amsthm}
\usepackage{amsfonts}
\usepackage{amssymb}
\usepackage{mathrsfs}
\usepackage[all]{xy}

\newtheorem{thm}{Theorem}[section]
\newtheorem{lemma}[thm]{Lemma}
\newtheorem{prop}[thm]{Proposition}

\def\Z{\Bbb Z}

\def\Q{\Bbb Q}
\def\C{\Bbb C}
\def\N{\Bbb N}

\DeclareMathOperator\aut{Aut} \DeclareMathOperator\inn{Inn}
 
 \DeclareMathOperator\out{Out}

\DeclareMathOperator\M{M} 
 
 \DeclareMathOperator\prob{Prob_\Sigma}
 
\DeclareMathOperator\GL{GL} 
 \DeclareMathOperator\Cay{Cay}
 \DeclareMathOperator\SL{SL}

\DeclareMathOperator\G{G} 
\DeclareMathOperator\trace{trace} \DeclareMathOperator\disc{disc}
\DeclareMathOperator\diag{diag} \DeclareMathOperator\mcg{MCG}
 \DeclareMathOperator\I{I} \DeclareMathOperator\Inne{Inn}


\textwidth = 15cm \textheight = 600pt

\makeatletter

\newcommand{\Rmnum}[1]{\expandafter\@slowromancap\romannumeral #1@}
\makeatother

\begin{document}

\begin{abstract}  Let $\pi:\aut(F_n)\rightarrow \aut(\Z^n)$ be the epimorphism induced by
the isomorphism $\Z^n \cong F_n/F_n'$ and define
$\mathcal{T}_n:=\ker\pi$. We prove that the subset of
$\mathcal{T}_n$ consists of all non-iwip and all non-hyperbolic
elements is exponentially small.
\end{abstract}

\title{Sieve methods in group theory \Rmnum{3}: $\aut(F_n)$ }

\keywords{Sieve; Property-$\tau$; iwip; hyperbolic;.}

\author{Alexander Lubotzky and Chen Meiri}
\address{Einstein Institute of Mathematics \\Hebrew University\\Jerusalem 90914, Israel}
\email{alexlub@math.huji.ac.il, chen.meiri@mail.huji.ac.il}
\maketitle
\date{\today}

\section{Introduction}

Let $\Gamma$ be a finitely generated group. A subset $\Sigma
\subseteq \Gamma$ is called \emph{admissible} if it is symmetric
(i.e. $\Sigma=\Sigma^{-1}$) and the Cayley graph
$\Cay(\Gamma,\Sigma)$ is not bi-partite. Fix an admissible
generating subset $\Sigma$ of $\Gamma$. If $Z\subseteq \Gamma$ then
the asymptotic behavior of the probability $\prob(w_k\in Z)$ that
the $k^{\text{th}}$-step of a random walk on $\Cay(\Gamma,\Sigma)$
belongs to $Z$ can be used to `measure' the density of $Z$ (the
random walk begins at the identity). In fact,
$$\prob(W_k\in Z):=\frac{|\{(s_1,\ldots,s_k)\in \Sigma^k \mid s_1\cdots s_k \in Z\}|}{|\Sigma|^k}$$
We say that $Z$ is \emph{exponentially small with respect to
$\Sigma$} if there exist constants $c,\alpha>0$ such that
$\prob(w_k\in Z) \le ce^{-\alpha k}$ for all $k \in \N$. The set $Z$
is called \emph{exponentially small} if it is exponentially small
with respect to all admissible generating subsets.

One of the first applications of the large sieve method in group
theory was a result of Rivin \cite{Ri1} and Kowlaski \cite{Ko}. They
proved that the set of non-pseudo-Anosov elements in the Mapping
Class Group, $\mcg$ for short, is exponentially small (see also
\cite{Mah}). Their proof uses the homomorphism form the $\mcg$ to
the symplectic group which is induced by the action on the homology
of the surface. Hence, the proof tells us nothing about the Torelli
group which is the kernel of this homomorphism. Kowlaski asked [Ko,
page 135] if the same result also holds for the Torelli subgroup. An
affirmative answer to this question was given in \cite{MS} and in
\cite{LuMe2}. The main idea in both proofs was to use the action of
the Torelli group on the homologies of double covers of the surface
in order to construct similar homomorphisms from the Torelli group
to symplectic groups.

There is a lot of similarity between the $\mcg$ and the automorphism
group $\aut(F_n)$ of a non-abelian free group $F_n$ of rank $n$. In
particular, there are two possible natural analogue notions to
pseudo-Anosov elements: iwip elements or hyperbolic elements (see
Section \ref{sec dfn} for definitions and \cite{KM} for a discussion
on the analogies). Let $\pi:\aut(F_n)\rightarrow \aut(\Z^n)$ be the
epimorphism induced by the isomorphism $\Z^n \cong F_n/F_n'$. The
subgroup $\mathcal{T}_n:=\ker\pi$ is an analog of the Torelli group
and it is finitely generated for $n \ge 3$ by a result of Magnus,
while for $n=2$ it is just the group of inner automorphisms
$\Inne(F_2)$.

The analogy between $\mcg$ (respectively the Torelli group) and
$\aut(F_n)$ (resp. $\mathcal{T}_n$) suggests that the subset of
$\aut(F_n)$ (resp. of $\mathcal{T}_n$) consisting of all non-iwip
and all non-hyperbolic elements is exponentially small. Rivin and
Kapovich proved that this in indeed the case for $\aut(F_n)$
\cite{Ri2}. In this note we show that the same result also holds for
$\mathcal{T}_n$. The idea of the proof is similar to the one we used
for the Torelli group case: we investigate the actions of
$\mathcal{T}_n$ on the abelizations of finite index subgroups of
$F_n$. For this we use a Theorem of Grunewald and the first author
which analyzes these action \cite{GL}. In fact, the situation for
the automorphism group case is somewhat less `symmetric'  then the
Torelli group case (see footnote 3) and we have to consider also the
action on subgroups of index three (and not just the action on
subgroups of index two which are the analog of double covers). Thus,
unlike the case of the $\mcg$ for which all the representations
studied (in \cite{LuMe2} or \cite{MS}) were naturally defined over
$\Z$, we have to consider here representations onto a subgroup $H$
of $\GL_{n-1}(\Z[\xi])$ where $\xi$ is a non-trivial cubic root of
unity. This brings some new challenges. For example, along the way
we have to prove (see Proposition \ref{xi} below) that the set
$$\{g \in H\mid \exists m \ge 1 \text{ such that the characteristic
polynomial of } g^m \text{ belongs to }\Z[t]\}$$ is exponentially
small. This is proved in Section 2. In Section 3, we describe the
Grunewald-Lubotzky theorem in the form needed here, while in Section
4, we discuss iwip and hyperbolic elements and prove the main result
of this paper- Theorem \ref{main thm}.

\section{Characteristic polynomials}

For a number field $K$ and an element $g \in \GL_n(K)$ let
$f_g:=\det(t\I_d-g)$ denote the characteristic polynomial of $g$ and
let $R_g$ denotes the set of roots of $f_g$. Let $\xi$ be a
non-trivial third root of unity.

Fix a subgroup $\Gamma$ of $\GL_n(\Z[\xi])$ which is commensurable
to $\SL_n(\Z[\xi])$. The goal of this section is to show that that
set
$$\{g \in \Gamma \mid \exists m \ge 1 \text{ such that }\ f_{g^m} \in \Z[t]\}$$
is exponentially small.

Let us recall and set up the notations of the process of
``restriction of scalars''. We can view $\Z[\xi]$ as a free
$\Z$-module of rank 2 with basis $1,\xi$. If $b \in \Z[\xi]$ then
the map $x \mapsto bx$ is a $\Z$-homomorphism of $\Z[\xi]$. Thus, we
have an embedding $\psi:\Z[\xi] \hookrightarrow \M_{2 \times 2}(\Z)$
(the embedding depends on the chosen basis of $\Z[\xi]$). The image
of $\Z$ under this embedding is the set of scalar matrices.
Moreover, an element $x \in \Z[\xi]$ belongs to $\Z$ if and only if
the $(2,1)$-coordinate of $\psi(x)$ equals to zero. We can view
$\M_{2n \times 2n}(\Z)$ as the ring of matrices of size $n \times n$
with entries in $\M_{2 \times 2}(\Z)$. Thus, $\psi$ induces the
restriction of scalars embedding $\varphi:\Gamma \hookrightarrow
\GL_{2n}(\Z)$.\footnote{There exists $h \in \GL_{2n}(Q(\xi))$ and an
automorphism $\alpha$ of the algebraic closure of $\Q$ such that
$\alpha(\xi)=\xi^{-1}$ and $h\varphi(g)h^{-1}=\diag(g,\alpha(g))$
for every $g \in \Gamma$ (where
$\alpha(g)_{i,j}:=\alpha(g_{i,j})$).} Explicitly, if $g \in \Gamma$
then $\varphi(g)_{2(i-1)+k,2(j-1)+l}=\psi(g_{i,j})_{k.l}$ for every
$1 \le i,j \le n$ and every $1 \le k,l \le 2$. In particular, the
trace of an element $g \in \Gamma$ belongs to $\Z$ if and only if
$\sum_{i=1}^{n}\varphi(g)_{2(i-1)+1,2(i-1)+2}=0$. Let $\G(\C)$ be
the Zariski-closure of $\varphi(\Gamma)$ in $\GL_{2n}(\C)$. The
connected component $\G^\circ(\C)$ of $\G(\C)$ is isomorphic to
$\SL_n(\C) \times \SL_n(\C)$ and in particular it is semisimple.

\begin{lemma}\label{128} The subset $Z:=\{g \in \Gamma \mid \trace(g)\in \Z\}$ is
exponentially small in $\Gamma$.
\end{lemma}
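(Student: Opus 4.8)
The plan is to realize $Z$ as the intersection of $\Gamma$ with a proper subvariety and then sieve. Writing $\trace(g)=a+b\xi$ with $a,b\in\Z$, an element $g$ lies in $Z$ exactly when $b=0$, which by the formula recorded above is the single linear condition $\sum_{i=1}^{n}\varphi(g)_{2(i-1)+1,2(i-1)+2}=0$ on the entries of $\varphi(g)$. Hence $\varphi(Z)=\varphi(\Gamma)\cap V$ for the hyperplane $V=\{\sum_i x_{2(i-1)+1,2(i-1)+2}=0\}\subseteq\GL_{2n}$, and $V\cap\G$ is a proper closed $\Q$-subvariety of $\G$: the $\xi$-coefficient of the trace is visibly non-constant on $\G^{\circ}\cong\SL_n(\C)\times\SL_n(\C)$, being already non-constant on a single $\SL_n$ factor. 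So $Z$ is the $\varphi$-preimage of the $\Gamma$-points of a proper subvariety of a semisimple group, exactly the situation the sieve is designed for.

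Next I would estimate the density of $Z$ modulo primes. Fix a prime $p$, unramified in $\Z[\xi]$ and large, and let $\Gamma(p)$ be the corresponding congruence subgroup. By strong approximation and the Matthews--Vaserstein--Weisfeiler/Nori theorem, $\varphi(\Gamma)$ reduces mod $p$ onto a subgroup of $\G(\F_p)$ containing $\G^{\circ}(\F_p)$ with index bounded independently of $p$; concretely $\G^{\circ}(\F_p)$ is $\SL_n(\F_p)\times\SL_n(\F_p)$ when $p$ splits in $\Z[\xi]$ and $\SL_n(\F_{p^2})$ when $p$ is inert. The image of $Z$ in $\Gamma/\Gamma(p)$ then lies in $(V\cap\G)(\F_p)$, which, $V\cap\G$ being a proper subvariety of bounded degree, has at most $O(p^{-1})\,|\G(\F_p)|$ points by the Lang--Weil estimates, with an absolute implied constant. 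Consequently, for all sufficiently large unramified $p$, the image of $Z$ in $\Gamma/\Gamma(p)$ has density at most $\tfrac12$.

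Finally I would invoke the sieve. Since $\Gamma$ is commensurable to $\SL_n(\Z[\xi])$ it has property $\tau$ with respect to its congruence subgroups --- this is property $(T)$ for $n\ge3$ and the congruence spectral gap for Bianchi groups when $n=2$ --- so the family $\{\Gamma(p)\}$ (over unramified $p$) has a uniform spectral gap, and the independence required of these quotients is immediate from coprimality of the levels. Theorem~\ref{sieve theorem}, applied to this family together with the density bound of the previous paragraph (valid for a set of primes of density one), then gives that $Z$ is exponentially small in $\Gamma$.

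The step I expect to be the main obstacle is checking the two things the sieve needs as genuine inputs: first, that ``$\trace(g)\in\Z$'' really is a non-trivial algebraic condition on every coset of $\G^{\circ}$ that $\varphi(\Gamma)$ meets and remains so after reduction --- for split $p$ the condition becomes $\trace(g_1)=\trace(g_2)$ on the two $\SL_n(\F_p)$-components, for inert $p$ it becomes $\trace(\bar g)^p=\trace(\bar g)$ in $\SL_n(\F_{p^2})$, and one must see these cut out proper subvarieties uniformly in $p$; and second, that the family $\{\Gamma(p)\}$ meets the precise quantitative hypotheses (uniform $\tau$ and positive sieve dimension) of Theorem~\ref{sieve theorem}. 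Once these are pinned down the remaining estimates are routine point-counting.
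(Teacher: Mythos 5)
Your approach is, in substance, the same as the paper's: both identify the condition $\trace(g)\in\Z$ with membership of $\varphi(g)$ in the hyperplane $V(\C)=\{A\in\G(\C)\mid\sum_{i=1}^{n}A_{2(i-1)+1,2(i-1)+2}=0\}$, and both reduce the lemma to the fact that $V(\C)$ contains no coset of the semisimple $\G^\circ(\C)\cong\SL_n(\C)\times\SL_n(\C)$, after which a sieve finishes. The difference is only in packaging: the paper cites Proposition~\ref{prop variety} (Proposition 5.3 of [LuMe1]), which already encapsulates the entire sieve pipeline (strong approximation, Nori/Matthews--Vaserstein--Weisfeiler, Lang--Weil, property~$\tau$, and the large-sieve inequality), whereas you re-derive those ingredients inline and appeal to a ``Theorem \ref{sieve theorem}'' that does not exist in this paper. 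That re-derivation is correct in outline but unnecessary given the cited proposition. The one step you flag as a remaining obstacle -- that $V(\C)$ misses \emph{every} coset of $\G^\circ(\C)$, not just $\G^\circ(\C)$ itself -- is exactly what the paper claims is ``not hard to see,'' and it is: after the conjugation $h\varphi(g)h^{-1}=\diag(g,\alpha(g))$ recorded in the footnote, the condition becomes $\trace(g_1)=\trace(g_2)$, so on a coset $(A,B)\G^\circ$ it reads $\trace(Ag_1)=\trace(Bg_2)$, which is non-constant since $\trace(Ag_1)$ already takes more than one value as $g_1$ ranges over $\SL_n(\C)$. I would also note that the bound ``density at most $1/2$'' in your mod-$p$ estimate is overstated for small $p$; what Lang--Weil gives, and what the sieve needs, is a density bounded away from $1$ for all sufficiently large good primes, which is what Proposition~\ref{prop variety} is built to exploit.
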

\begin{proof}The set $\varphi(Z)$ is contained in the subvariety
$$V(\C):=\{A \in \G(\C) \mid \sum_{i=1}^{n}A_{2(i-1)+1,2(i-1)+2}=0\}$$
where $A_{i,j}$ is the $(i,j)$-coordinate of $A$. It is not hard to
see that $V(\C)$ does not contain any coset of $\G^\circ(\C)$.
Proposition \ref{prop variety} below completes the proof.
\end{proof}

\begin{prop}[see Proposition 5.3 of \cite{LuMe1}]\label{prop variety}
Let $\Gamma$ be a finitely generated subgroup of $\GL_n(\Q)$ such
that connected component $\G^\circ(\C)$ of its Zariski-closure is
semisimple. Assume that $V(\C)$ is a variety defined over $\Z$ and
that $V(\C)$ does not contain any coset of $\G^\circ(\C)$. Then,
$V(\C)\cap \Gamma$ is exponentially small in $\Gamma$.
\end{prop}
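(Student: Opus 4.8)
The plan is to feed the geometry of $V(\C)$ into the sieve framework of \cite{LuMe1}: a subvariety containing no coset of $\G^\circ(\C)$ has strictly smaller dimension, hence is genuinely thin modulo every prime, and property $\tau$ converts that thinness into an exponential bound. Write $\G(\C)=\bigsqcup_{i=1}^{r}\G^\circ(\C)g_i$ for the finitely many cosets of the identity component and put $d:=\dim\G^\circ(\C)$. First I would replace $V(\C)$ by $V(\C)\cap\G(\C)$, which changes neither the set $Z:=V(\C)\cap\Gamma$ (since $\Gamma\subseteq\G(\C)$) nor the hypotheses; since each coset $\G^\circ(\C)g_i$ is irreducible of dimension $d$ and $V(\C)$ contains none of them, $V(\C)\cap\G^\circ(\C)g_i$ is a \emph{proper} closed subvariety of $\G^\circ(\C)g_i$, so $\dim V(\C)\le d-1$. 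This dimension drop is the sole use of the coset hypothesis, and it is exactly what fails for a whole coset of a finite-index subgroup of $\Gamma$, which is of course not exponentially small.

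Next I would reduce modulo primes. Let $\bar\Gamma_p$ be the image of $\Gamma$ in $\GL_n(\F_p)$ and $\bar Z_p$ the image of $Z$. Since $V(\C)$ is defined over $\Z$, membership $g\in Z$ forces $g\bmod p\in V(\F_p)$, so $\bar Z_p\subseteq V(\F_p)$. By strong approximation (using that $\G^\circ$ is semisimple, passing to its simply connected cover if needed), for all but finitely many $p$ the group $\bar\Gamma_p$ contains $\G^\circ(\F_p)$ — or, in general, a bounded-index approximation thereof — and the reduction $\Gamma\to\prod_{p\le P}\bar\Gamma_p$ is essentially onto $\prod_{p\le P}\G^\circ(\F_p)$. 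Together with the Lang--Weil bounds $|V(\F_p)|\le C\,p^{\,d-1}$ (here $\dim V\le d-1$ is essential) and $|\G^\circ(\F_p)|\ge c\,p^{\,d}$, uniform in $p$, this yields $|\bar Z_p|/|\bar\Gamma_p|\le C'/p$ for all large $p$; and, because lying in $Z$ forces the reductions modulo \emph{every} $p\le P$ simultaneously into the thin sets $V(\F_p)$, the image of $Z$ in $\bar\Gamma_{\le P}:=$ (the image of $\Gamma$ in $\prod_{p\le P}\bar\Gamma_p$) has density at most $\prod_{C'<p\le P}C'/p=e^{-(1+o(1))P}$.

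The decisive ingredient is property $\tau$: with respect to the image of any fixed admissible generating set, the Cayley graphs of the quotients $\bar\Gamma_{\le P}$ form a family of expanders with spectral gap bounded below independently of $P$. For arithmetic $\Gamma$ — in particular for $\Gamma$ commensurable to $\SL_n(\Z[\xi])$ — this is classical (Kazhdan's property (T) in higher rank, a Selberg-type bound otherwise); in general it is the super-strong approximation theorem. Granting it, I would invoke the large sieve of \cite{LuMe1}, or argue directly: the length-$k$ random walk projects to $\bar\Gamma_{\le P}$, which has $e^{O(P)}$ elements and, being a bounded-degree expander, mixes in $O(P)$ steps, so that for some $\lambda<1$,
$$\prob(w_k\in Z)\ \le\ \prod_{C'<p\le P}\frac{C'}{p}\ +\ e^{O(P)}\lambda^{k}\ \le\ e^{-(1+o(1))P}+e^{O(P)}\lambda^{k}.$$
Taking $P=\beta k$ for a sufficiently small $\beta>0$ makes both terms $e^{-\Omega(k)}$, which is the assertion. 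Note it is precisely the estimate $|\bar Z_p|/|\bar\Gamma_p|=1-O(1/p)\to1$ (not merely $\le 1-\varepsilon$) that forces exponential, rather than merely polynomial, decay — this is where the coset hypothesis earns its keep.

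The step I expect to be the real obstacle is securing property $\tau$ / super-strong approximation for the congruence quotients of $\Gamma$, together with the strong-approximation statement identifying those quotients with (near-)products of finite simple groups; granted these, the dimension count for $V(\C)$, the Lang--Weil upper bound, and the optimisation $P\asymp k$ are routine. In the setting of this paper $\Gamma$ is arithmetic, so these inputs are available off the shelf, and $V(\C)$ is the explicit trace hyperplane $\sum_i A_{2(i-1)+1,2(i-1)+2}=0$, for which $\dim V=d-1$ is immediate.
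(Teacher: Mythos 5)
Your sketch is correct and matches the method used to prove the cited result in \cite[Prop.~5.3]{LuMe1}, which this paper invokes without reproving: the coset hypothesis forces $\dim(V\cap\G)<\dim\G^\circ$, Lang--Weil gives density $O(1/p)$ at each prime, strong approximation gives (near-)independence across primes, and property $(\tau)$ for the congruence quotients (automatic here since $\Gamma$ is a higher-rank arithmetic group) drives the sieve. The ``argue directly'' variant you offer --- an expander-mixing bound with $P\asymp k$ in place of the full large sieve inequality --- is a mild simplification but is sufficient precisely because the local densities are $O(1/p)$ rather than merely bounded away from $1$, so there is no gap.
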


Another consequence of Proposition \ref{prop variety} is:
\begin{lemma}\label{129} The subset
$$W:=\{g \in \Gamma \mid \text{ There exists } m\ge 1 \text{ such that } f_{\varphi(g^m)}
\text{ has multiply roots }\}$$ is exponentially small  in $\Gamma$.
\end{lemma}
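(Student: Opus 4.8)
The plan is to cover $W$ by the preimage under $\varphi$ of a single subvariety of $\GL_{2n}(\C)$ and apply Proposition \ref{prop variety}. The crucial point is that although $W$ involves an unbounded power $m$, a coincidence among the eigenvalues of $\varphi(g^m)$ forces a coincidence among those of $\varphi(g)$ or of $\varphi(g^{N})$ for one fixed integer $N=N(n)$ depending only on $n$. Granting this, $W$ is contained in $\varphi^{-1}(V(\C)\cap\G(\C))$ where
$$V(\C):=\{A\in\GL_{2n}(\C)\mid \disc(f_A)\cdot\disc(f_{A^{N}})=0\},$$
a variety defined over $\Z$, since the entries of $A^{N}$ and the discriminant of a monic polynomial are polynomials with integer coefficients in the entries of $A$.

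To produce $N$, fix $g\in\Gamma$ and suppose $f_{\varphi(g^m)}$ has a multiple root for some $m\ge1$. If $\varphi(g)$ is not semisimple it has a repeated eigenvalue, so $\disc(f_{\varphi(g)})=0$. Otherwise $\varphi(g)$ has eigenvalues $\lambda_1,\dots,\lambda_{2n}$ and $\varphi(g^m)=\varphi(g)^m$ has eigenvalues $\lambda_i^m$, so $\lambda_i^m=\lambda_j^m$ for some $i\ne j$; either $\lambda_i=\lambda_j$, in which case again $\disc(f_{\varphi(g)})=0$, or $\zeta:=\lambda_i\lambda_j^{-1}$ is a root of unity $\ne1$. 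Here $f_{\varphi(g)}$ is monic of degree $2n$ over $\Z$ and $f_{\varphi(g)}(0)=\det\varphi(g)=\pm1$ because $\det g$ is a unit of $\Z[\xi]$; hence $\lambda_i$ and $\lambda_j^{-1}$ are algebraic integers and $\zeta$ is a root of unity lying in a number field of degree at most $(2n)!$ over $\Q$. Since the Euler totient tends to infinity, the order of $\zeta$ is bounded by some $N_0=N_0(n)$, so with $N:=N_0!$ we get $\zeta^{N}=1$, i.e. $\lambda_i^{N}=\lambda_j^{N}$, whence $\disc(f_{\varphi(g^{N})})=0$. This proves the claim and the inclusion $W\subseteq\varphi^{-1}(V(\C)\cap\G(\C))$.

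It remains to check that $V(\C)$ contains no coset of $\G^\circ(\C)$. Writing $V(\C)=V_1(\C)\cup V_2(\C)$ for the two discriminant factors, and noting that the connected components of $\G(\C)$ are cosets of $\G^\circ(\C)$ and hence irreducible, it suffices to exhibit in each component an element $X$ for which both $f_X$ and $f_{X^{N}}$ have distinct roots. By the footnote, conjugation by $h$ carries $\varphi(\Gamma)$ into the block-diagonal matrices $\diag(g,\alpha(g))$, so each component of $\G(\C)$ is carried to a coset $\{\diag(A,B)\mid \det A=c,\ \det B=\alpha(c)\}$ of $\SL_n(\C)\times\SL_n(\C)$ (using $\G^\circ(\C)\cong\SL_n(\C)\times\SL_n(\C)$). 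Choosing $A$ and $B$ diagonal with $2n$ distinct nonzero complex entries whose $N$-th powers are again $2n$ distinct numbers — a nonempty, hence Zariski-dense, condition once $\det A=c$ and $\det B=\alpha(c)$ are imposed — gives the required $X$. Proposition \ref{prop variety} applied to the finitely generated group $\varphi(\Gamma)\subseteq\GL_{2n}(\Q)$, whose Zariski closure has semisimple identity component as recalled above, then shows $V(\C)\cap\varphi(\Gamma)$ is exponentially small; this passes to $W$ because $\varphi$ is an isomorphism onto its image that sends admissible generating sets of $\Gamma$ to admissible generating sets of $\varphi(\Gamma)$. The one genuinely delicate point is the boundedness of $N$: it rests on the integrality of both $\varphi(g)$ and $\varphi(g)^{-1}$, equivalently on $\det g\in\Z[\xi]^\times$; everything else is a formal consequence of Proposition \ref{prop variety} together with the explicit block structure of $\varphi$.
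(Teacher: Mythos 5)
Your proof is correct and follows the same route as the paper: bound the exponent $m$ by a constant $N(n)$ via the observation that a multiple root of $f_{\varphi(g)^m}$ produces a root of unity of bounded degree over $\Q$, then pass to a single fixed variety $\{A:\disc(f_{A^N})=0\}$ and invoke Proposition~\ref{prop variety}. The only difference is cosmetic --- your factor $\disc(f_A)$ is redundant, since a repeated eigenvalue of $A$ already gives a repeated eigenvalue of $A^N$ --- and you spell out the ``contains no coset of $\G^\circ$'' verification, which the paper leaves to the reader.
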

\begin{proof}
Let $g \in \Gamma$ and assume that the characteristic polynomial of
some positive power of $\varphi(g)$ has multiply roots. Let $m \ge
1$ be the minimal positive integer for which $f_{\varphi(g)^m}$ has
multiply roots. Then, there is a root $x$ of $f_{\varphi(g)}$ and a
primitive $m$-root of unity $\zeta$ such that $\zeta x$ is also a
root of $f_{\varphi(g)}$. Thus, $\zeta$ belong to the normal closure
$K_{\varphi(g)}$ of $\Q(R_{\varphi(g)})$ (recall that
$R_{\varphi(g)}$ is the set of roots of $f_{\varphi(g)}$). However,
$[K_{\varphi(g)}:Q] \le (2n)!$ and there are only finitely many
roots of unity which belong to an algebraic extension of $\Q$ of
bounded degree. So, $m$ is bounded by some constant $c_n$ which
depends only on $n$. In particular, if the characteristic polynomial
of some positive power of $\varphi(g)$ has multiply roots then
$f_{\varphi(g)^{c_n!}}$ has multiply roots. A polynomial has
multiply roots if and only if its discriminant is equal to zero.
Define $W(\C):=\{A \in \G(\C) \mid \disc(f_{A^{c_n!}})=0\}$. It is
not hard to verify that the variety $W(\C)$ does not contain any
cost of $\G^\circ(\C)$. Proposition \ref{prop variety} completes the
proof.
\end{proof}
We are ready to prove the main proposition of this section:
\begin{prop}\label{xi} The set
$$T:=\{g \in \Gamma \mid \exists m \ge 1 \text{ such that } f_{g^m} \in \Z[t]\}$$
is exponentially small in $\Gamma$.
\end{prop}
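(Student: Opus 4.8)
The plan is to reduce membership in $T$ to the two already-established exponentially small sets $Z$ (from Lemma \ref{128}) and $W$ (from Lemma \ref{129}), together with one further variety argument for the genuinely new obstruction. First I would observe that $f_{g^m}\in\Z[t]$ forces, in particular, $\trace(g^m)\in\Z$ and indeed $f_{\varphi(g^m)}=f_{g^m}\cdot \alpha(f_{g^m})$ (using the footnote's factorization $h\varphi(g)h^{-1}=\diag(g,\alpha(g))$), so $f_{g^m}\in\Z[t]$ implies $\alpha(f_{g^m})=f_{g^m}$, i.e. the Galois-conjugate factor coincides with the original. Thus if $g\in T$ then every power-trace symmetric function of the eigenvalues of $g^m$ that lies a priori in $\Z[\xi]$ must in fact lie in $\Z$. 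The cheap consequence is that $g^m\in Z$, but $Z$ being exponentially small does not immediately give that $T$ is (the exponent $m$ varies and raising to a power does not preserve the complement of an exponentially small set). So the real work is to promote "$f_{g^m}\in\Z[t]$ for some $m$" to "$f_{\varphi(g)^M}$ satisfies a fixed polynomial identity" for a uniformly bounded $M$.

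The mechanism for that uniform bound should mirror Lemma \ref{129}. If $g\notin W$, then for all $m$ the roots of $f_{\varphi(g^m)}$ are the $m$-th powers of the (distinct) roots $x_1,\dots,x_{2n}$ of $f_{\varphi(g)}$, listed without collision. Now $f_{g^m}\in\Z[t]$ means the multiset $\{x_i^m : i \text{ from the first block}\}$ equals the multiset $\{\alpha(x_i)^m = x_{\sigma(i)}^m\}$ for the Galois automorphism $\alpha$ and its induced permutation $\sigma$ of the roots; equivalently $\alpha$ permutes $\{x_i^m\}$ within each block. Because collisions are excluded, $\alpha(x_i)/x_j$ being an $m$-th root of unity for the matching index $j=j(i)$ forces, exactly as in the proof of Lemma \ref{129}, that any such root of unity lies in the normal closure $K_{\varphi(g)}$ of $\Q(R_{\varphi(g)})$, whose degree over $\Q$ is bounded by $(2n)!$. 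Hence there are only finitely many possible orders, so there is a single integer $M=M_n$ (e.g. $M=c_n!$ as before) with the property that $g\in T\setminus W \Rightarrow f_{\varphi(g)^M}$ is invariant under $\alpha$, equivalently $\trace(\varphi(g)^M)\in\Z$, equivalently (and this is the clean algebraic statement) all coefficients of $f_{\varphi(g)^M}$ that are forced into $\Z[\xi]\setminus\Z$ by restriction of scalars actually vanish in their "$\xi$-part".

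I would then encode this last condition as a proper subvariety. Define
$$U(\C):=\Bigl\{A \in \G(\C) \;\Big|\; \textstyle\sum_{i=1}^{n} \bigl(A^M\bigr)_{2(i-1)+1,\,2(i-1)+2}=0\Bigr\},$$
i.e. the same type of "trace lies in $\Z$" condition as in Lemma \ref{128} but applied to the $M$-th power. Since $A\mapsto A^M$ is a morphism and, on $\G^\circ(\C)\cong\SL_n(\C)\times\SL_n(\C)$, the function $A\mapsto\trace(A^M)-\trace(\alpha(A)^M)$ is visibly not identically zero on any coset (take, e.g., a regular semisimple element with eigenvalues making $\trace(A^M)\neq\trace(\alpha(A)^M)$), the variety $U(\C)$ contains no coset of $\G^\circ(\C)$; it is defined over $\Z$. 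By the discussion above, $T\subseteq W \cup (U(\C)\cap\Gamma)$. Lemma \ref{129} handles $W$, and Proposition \ref{prop variety} applied to $U(\C)$ handles the rest, so $T$ is exponentially small. I expect the main obstacle to be the bookkeeping in the middle step: making rigorous the passage from the multiset identity $\{x_i^m\}=\{\alpha(x_i)^m\}$ (valid for one unknown $m$) to the fixed-exponent statement, i.e. checking that the ratios $\alpha(x_i)/x_{j(i)}$ are genuinely roots of unity in $K_{\varphi(g)}$ and that the matching index $j(i)$ can be chosen coherently — this is exactly where the hypothesis $g\notin W$ (no multiple roots in any power) is needed to rule out degenerate matchings, and it is the one place where the cubic-field setting differs from the quadratic (double-cover) case and must be handled with a little care rather than by direct analogy. $\Box$
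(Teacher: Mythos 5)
Your proposal is correct in outline, but it takes a genuinely different and more laborious route than the paper. You reason ``forward'' from $g\in T$: since the exponent $m$ in $f_{g^m}\in\Z[t]$ varies, you look for a uniform $M=c_n!$ such that $g\in T\setminus W$ already forces $f_{g^M}\in\Z[t]$, and then you introduce a new subvariety $U(\C)$ of $\G(\C)$ (encoding $\trace(A^M)$ being fixed by $\alpha$) and invoke Proposition \ref{prop variety} a third time. That works, but it requires re-running the bounded-order-of-roots-of-unity argument from Lemma \ref{129}, defining and analyzing an additional variety, and verifying the coset condition for $U(\C)$. The paper instead argues in the contrapositive and shows directly that $T\subseteq Z\cup W$, so only Lemmas \ref{128} and \ref{129} are needed and no new variety appears: if $g\notin Z$ then $\trace(g)\notin\Z$, so $f_g\notin\Z[t]$ and by the observation (3) there is $\alpha$ with $\alpha(R_g)\ne R_g$; if also $g\notin W$ then $x\mapsto x^m$ is injective on $R_{\varphi(g)}\supseteq R_g\cup\alpha(R_g)$, which immediately propagates $\alpha(R_g)\ne R_g$ to $\alpha(R_{g^m})\ne R_{g^m}$ for every $m\ge1$, hence $f_{g^m}\notin\Z[t]$ for every $m$. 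You actually noticed the relevance of $Z$ but dismissed it as a ``cheap consequence'' going the wrong way; the point you missed is that the useful direction is the reverse implication ($g\notin Z\Rightarrow f_g\notin\Z[t]$), which together with the injectivity given by $g\notin W$ makes the variable exponent harmless and makes your uniform bound $M$ and variety $U(\C)$ unnecessary.
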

\begin{proof}
In light of Lemmas \ref{128} and \ref{129} it is enough to show that
$T \subseteq Z \cup W$. For every $g \in \Gamma$ the following
holds:
\begin{itemize}
\item [1.] $R_g \subseteq R_{\varphi(g)}$.
\item[2.] If $\alpha \in \aut(\tilde{Q})$ then $\alpha(R_g) \subseteq R_{\varphi(g)}$ ($\tilde{Q}$ is the algebraic closure of
$\Q$).\footnote{In fact, if $\alpha\in\aut(\tilde{Q})$ and
$\alpha(\xi)\ne \xi$ then $R_g \cup \alpha(R_g)=R_{\varphi(g)}$
while if $\alpha(\xi)=\xi$ then $R_g=R_{\alpha(g)}$.}
\item[3.] $f_g \in \Z[t]$ if an only if $\alpha(R_g)=R_g$ for every $\alpha \in
\aut(\tilde{Q})$.
\end{itemize}

Assume that $g \not \in Z \cup W$. Then as $g \not \in Z$, Condition
3 shows that $\alpha(R_g)\ne R_g$ for some $\alpha \in
\aut(\tilde{Q})$. In turn, conditions 1 and 2 together with the fact
that $g \not \in W$ imply that $\alpha(R_{g^m})\ne R_{g^m}$ for
every $m \ge 1$. The other direction of condition 3 then shows that
$f_{g^m}\not \in \Z[t]$ for every $m\ge 1$.
\end{proof}

\section{Grunewald-Lubotzky Theorem}

Fix $n \ge 3$ and a basis $x_1,\ldots,x_n$ of $F_n$. For $s \ge 2$,
let $K_s$ be the kernel of the homomorphism form $F_n$ to $\Z/s\Z$
which sends $x_n$ to $1$ and $x_1,\ldots,x_{n-1}$ to $0$. Denote
$y_{k,i}:=x_n^{-i}x_kx_n^i$ for $0 \le i \le s-1$ and $1 \le k \le
n-1$. Then, the set $$\{y_{k,i} \mid 0 \le i \le s-1\ \wedge\ 1 \le
k \le n-1\}\cup\{x_n^s\}$$ is a free basis of $K_s$.

There is a natural homomorphism $\alpha:K_s/K_s'\rightarrow
F_n/F_n'$. Since $F_n/K_s$ is abelian every $\varphi \in
\mathcal{T}_n$ preserves $K_s$. Thus, $\varphi$ also acts as an
automorphism on $K_s/K_s'$ and as the identity on $F_n/F_n'$. These
actions commute with $\alpha$, i.e., with a little abuse of notation
we have $\varphi \circ \alpha =\alpha \circ \varphi$. In particular,
$\varphi$ preserves $\ker \alpha$. Denote $$L_s:=\langle
y_{k,i}y_{k,i+1}^{-1}\mid 0 \le i \le s-2\ \wedge\ 1\le k \le
n-1\rangle.$$ Then, $L_s$ is a free factor of $K_s$ and $\ker
\alpha=L_sK_s'/K_s'$.

The abelian group $K_s/K_s'$ has a structure of a $\Z[\xi_s]$-module
where $\xi_s$ be a $s^{\text{th}}$-root of unity. For $k \in K_s$,
$\xi_s(kK_s')=x_n^{-1}kx_nK_s'$. The subgroup $L_sK_s'/K_s'$ is in
fact a free $\Z[\xi_s]$-submodule with a basis
$$D_s:=\{d_{k}\mid   1 \le k \le n-1\}$$ where $d_{k}:=y_{k,0}y_{k,1}^{-1}K_s'$.

Every $\varphi\in\mathcal{T}_n$ acts as an automorphism on
$L_sK_s'/K_s'$ and preserves its structure as a $\Z[\xi_s]$-module.
The group of $\Z[\xi_s]$-automorphisms of $L_sK_s'/K_s'$ is
isomorphic to $\GL_{n-1}(\Z[\xi_s])$ where the isomorphism is depend
on the basis chosen for $L_sK_s'/K_s'$.

Thus, there exists a homomorphism $\rho_s:\mathcal{T}_n \rightarrow
\GL_{n-1}(\Z[\xi_s])$ with respect to the above basis $D_s$.

\begin{thm}[Grunewald-Lubotzky \cite{GL}] Fix $s \ge 2$ and let $\rho_s:\mathcal{T}_n \rightarrow
\GL_{n-1}(\Z[\xi_s])$ be the above homomorphism. Then
$\rho(\mathcal{T}_n)$ is commensurable with $\SL_{n-1}(\Z[\xi_s])$.
\end{thm}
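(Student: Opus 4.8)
This is a theorem of Grunewald and Lubotzky \cite{GL}; we only indicate the strategy, and refer to \cite{GL} for the details. The plan is to sandwich $\rho_s(\mathcal{T}_n)$ between two subgroups of $\GL_{n-1}(\Z[\xi_s])$ that are each commensurable with $\SL_{n-1}(\Z[\xi_s])$, and the whole argument reduces to evaluating $\rho_s$ on an explicit generating set of $\mathcal{T}_n$. By a theorem of Magnus, for $n\ge 3$ the group $\mathcal{T}_n$ is generated by the automorphisms $K_{k\ell}$ ($k\neq\ell$) with $x_k\mapsto x_\ell x_k x_\ell^{-1}$ and $x_i\mapsto x_i$ for $i\neq k$, together with the automorphisms $K_{k,[\ell,m]}$ ($k,\ell,m$ distinct) with $x_k\mapsto x_k[x_\ell,x_m]$ and $x_i\mapsto x_i$ for $i\neq k$. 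The first step is, for each such generator $\varphi$, to compute the matrix $\rho_s(\varphi)\in\GL_{n-1}(\Z[\xi_s])$ by following the action of $\varphi$ on the free basis $\{y_{k,i}\}\cup\{x_n^s\}$ of $K_s$, projecting to $K_s/K_s'$, restricting to the $\Z[\xi_s]$-submodule $L_sK_s'/K_s'$, and reading off coordinates in the basis $D_s$; throughout one uses that $\xi_s\cdot(kK_s')=x_n^{-1}kx_nK_s'$ and that conjugation by $x_n^s\in K_s$ is trivial modulo $K_s'$.

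Carrying this out, every Magnus generator not involving $x_n$ acts trivially on $L_sK_s'/K_s'$, so only the generators in which $x_n$ appears matter. Conjugation of some $x_k$ by $x_n$ maps, in the basis $D_s$, to a diagonal matrix $\diag(1,\dots,\xi_s^{-1},\dots,1)$, while the commutator-type generators $K_{k,[\ell,n]}$ and $K_{k,[n,\ell]}$ map to transvections, i.e. matrices $E_{ij}(\lambda)$ equal to the identity with $\lambda$ added in the $(i,j)$ entry ($i\neq j$), with $\lambda$ of the form $1-\xi_s^{\pm1}$. Two things follow. First, every generator, hence every element of $\rho_s(\mathcal{T}_n)$, has determinant a root of unity in $\Z[\xi_s]$; since the group $\mu$ of roots of unity in $\Z[\xi_s]$ is finite, $\rho_s(\mathcal{T}_n)$ is contained in $\{g\in\GL_{n-1}(\Z[\xi_s]):\det g\in\mu\}$, a group containing $\SL_{n-1}(\Z[\xi_s])$ with index dividing $|\mu|$. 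Second, conjugating the transvections above by the diagonal matrices rescales the entry $\lambda$ by arbitrary powers of $\xi_s$, and (using the identity $[E_{ij}(a),E_{jk}(b)]=E_{ik}(ab)$ with a third index when $n-1\ge 3$, and directly when $n-1=2$) one reaches $E_{ij}(\lambda)$ for all $i\neq j$ and all $\lambda$ in some fixed nonzero ideal $\mathfrak{a}$ of $\Z[\xi_s]$.

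It remains to pass from this to finite index. Since $\Z[\xi_s]$ is an order in a number field, $n-1\ge 2$, and $\mathfrak{a}\neq 0$, the subgroup generated by $\{E_{ij}(\lambda):\lambda\in\mathfrak{a},\ i\neq j\}$ has finite index in $\SL_{n-1}(\Z[\xi_s])$: for $n-1\ge 3$ this is the theorem of Bass--Milnor--Serre and Vaserstein on elementary and congruence subgroups (together with the vanishing of $SK_1$ for rings of integers), and for $n-1=2$, which in this paper only occurs with $s\in\{2,3\}$ where $\Z[\xi_s]$ is Euclidean, one has $E_2=\SL_2$ and the relevant congruence subgroup is reached by hand. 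Thus $\rho_s(\mathcal{T}_n)$ contains a subgroup of finite index in $\SL_{n-1}(\Z[\xi_s])$ and is contained in a group that contains $\SL_{n-1}(\Z[\xi_s])$ with finite index, so the two are commensurable. Alternatively, once the computation shows that $\rho_s(\mathcal{T}_n)$ is Zariski-dense in $\SL_{n-1}$ over $\Q(\xi_s)$ and contains a full unipotent root subgroup over $\Z[\xi_s]$ together with its opposite, one may replace the last step by the arithmeticity criterion of Tits and Venkataramana.

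The main obstacle is the explicit generator computation of the second paragraph, and in particular the verification that one obtains elementary matrices over the \emph{whole} ring $\Z[\xi_s]$ (up to a finite-index ideal) rather than only over $\Z$: this is exactly where the twist $\xi_s\cdot(kK_s')=x_n^{-1}kx_nK_s'$ in the module structure has to be exploited. It is also the point at which the hypothesis $n\ge 3$ is essential, both to guarantee that $\mathcal{T}_n$ genuinely contains the commutator-type Magnus generators and, for $n-1\ge 3$, to have the spare index needed for the identity $[E_{ij}(a),E_{jk}(b)]=E_{ik}(ab)$.
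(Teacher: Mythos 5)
The paper does not prove this theorem; it is stated as a cited result of Grunewald and Lubotzky \cite{GL}, so there is no in-paper argument to compare against. You recognize this and offer a proof sketch, which is the right reflex, and the outline you give is the natural one: evaluate $\rho_s$ on the Magnus generators of $\mathcal{T}_n$ in the basis $D_s$, observe that generators not touching $x_n$ die, that conjugations $x_k\mapsto x_n x_k x_n^{-1}$ produce the diagonal twists $\diag(1,\dots,\xi_s^{\mp 1},\dots,1)$, and that the commutator generators $K_{k,[\ell,n]}$ produce transvections $E_{ij}(1-\xi_s^{\pm 1})$; this all matches the module structure set up in Section 3 and I checked the representative computations.

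The step that is not secure as written is the passage from ``$\rho_s(\mathcal{T}_n)$ contains all $E_{ij}(\lambda)$, $\lambda\in\mathfrak{a}$'' to ``finite index in $\SL_{n-1}(\Z[\xi_s])$'' when $n-1=2$. For $n\ge 4$ the Bass--Milnor--Serre/Vaserstein citation is fine. But the paper genuinely needs $n=3$, $s=3$, and there $\SL_2(\Z[\xi_3])$ is (a lift of) a Bianchi group: $\Q$-rank 1, real rank 1, no congruence subgroup property, and no infinite unit group, so Vaserstein's $\SL_2$ result does not apply; likewise the Venkataramana/Tits arithmeticity alternative you offer requires higher rank and also fails here. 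Saying that $\Z[\xi_3]$ is Euclidean gives $E_2(\Z[\xi_3])=\SL_2(\Z[\xi_3])$, but that says nothing about whether $E_2$ over the proper ideal $\mathfrak{a}=(1-\xi_3)$ has finite index, and ``reached by hand'' is precisely where the content of that claim is hiding. So for $n\ge 4$ (and for $n=3,s=2$, where the Sanov subgroup of $\SL_2(\Z)$ does the job) the sketch is convincing; for $n=3,s=3$ you would need either a direct verification that the explicit subgroup you have built (diagonals together with $E_{ij}$ over $(1-\xi_3)$) has finite index, or to import whatever special argument \cite{GL} uses in that case.
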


For $s=2$ the above theorem in similar to Proposition 3 of
\cite{LuMe2}. However, we shall see in the next section that unlike
in the Torelli group case where the analog of  $\rho_2$ suffices, in
the $\mathcal{T}_n$ case we also have to consider $\rho_3$.

\section{Iwip and hyperbolic elements}\label{sec dfn}

We start by recalling the definitions of iwip and hyperbolic
elements. A more detailed discussion about these elements and the
analogy to pseudo-Anosov elements in the mapping class group can be
found in \cite{KM}. An element $g \in \aut(F_n)$ is called
\emph{reducible} if there are non-trivial proper subgroups
$H_1,\ldots,H_k < F_n$ such that $H_1*\cdots *H_k$ is a free factor
of $F_n$ and $g(H_{i})$ is conjugate to $H_{i+1}$ for every $1 \le i
\le k$ where the addition in the subscript is modulo $k$. An element
$g \in \aut(F_n)$ is called \emph{irreducible with irreducible
powers}, or \emph{iwip} for short, if for every $m \ge 1$ the
automorphism $g^m$ is not reducible. Hence, if $g \in \aut(F_n)$ is
not iwip then there are $m \ge 1$ and a non-trivial proper free
factor $H$ such that $g^m(H)$ is conjugate to $H$. Rivin \cite{Ri1}
proved that the set of non-iwip elements of $\aut(F_n)$ is
exponentially small.

An element $g \in \aut(F_n)$ is called \emph{hyperbolic} if for
every $m \in \N^+$, the element $g^m$ does not fix any conjugacy
class of a non-trivial element.  Rivin and Kapovich \cite{Ri2}
proved that the set of non-hyperbolic elements of $\aut(F_n)$ is
exponentially small.

Note that both properties, iwip and hyperbolic, are invariant by
multiplication by inner automorphisms, so can be thought (and
usually are thought) as properties of elements of
$\out(F_n)=\aut(F_n)/\inn(F_n)$. As $\mathcal{T}_2=\Inne(F_2)$,
there is no interest in studying these properties in the case $n=2$
and we therefore assume that $n\ge 3$.

The proofs of the above results use the homomorphism
$\pi:\aut(F_n)\rightarrow \aut(\Z^n)$ so they give us no information
on the subgroup $\mathcal{T}_n:=\ker\pi$. However, these results for
$\mathcal{T}_n$ can be obtained by looking at covers.

\begin{prop}\label{iwip} There are homomorphisms
$\rho_1,\ldots,\rho_{2^n-1}:\mathcal{T}_n \rightarrow \GL_{n-1}(\Z)$
and $\psi_1,\ldots,\psi_{3^n-1}:\mathcal{T}_n \rightarrow
\GL_{n-1}(\Z[\xi])$ where $\xi$ is a non-trivial third root of unity
such that:
\begin{itemize}
\item[1.] For every $1 \le i \le 2^n-1$, $\rho_i(\mathcal{T}_n)$ is
of finite index in $\GL_{n-1}(\Z)$.
\item[2.] For every $1 \le i \le 3^n-1$, $\psi_i(\mathcal{T}_n)$
is commensurable with $\SL_{n-1}(\Z[\xi])$.
\item[3.] If $\varphi \in \mathcal{T}_n$ is not iwip then there are
$m \ge 1$, $1 \le i \le 2^n-1$ and $1 \le j \le 3^n-1$ such that the
characteristic polynomial of  $\rho_i(\varphi^m)$ is reducible or
the  characteristic polynomial of  $\psi_j(\varphi^m)$ belongs to
$\Z[t]$.
\end{itemize}
\end{prop}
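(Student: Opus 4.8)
The plan is to build, for each surjective homomorphism $\theta\colon F_n\to\Z/s\Z$ with $s\in\{2,3\}$, the representation of $\mathcal T_n$ on the relevant homology module of the corresponding cover, exactly as in Section 3, and then to locate the non-iwip elements. There are $2^n-1$ such $\theta$ for $s=2$ and $3^n-1$ for $s=3$. For each, put $K_\theta:=\ker\theta$; since $K_\theta\supseteq F_n'$, every $\varphi\in\mathcal T_n$ acts trivially on $F_n/K_\theta$, hence preserves $K_\theta$, acts on $K_\theta/K_\theta'$ commuting with the natural map $\alpha_\theta\colon K_\theta/K_\theta'\to F_n/F_n'$ (on which it is the identity), and therefore preserves and acts $\Z[\xi_s]$-linearly on $M_\theta:=\ker\alpha_\theta$. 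For $s=2$ this gives $\rho_i\colon\mathcal T_n\to\GL_{n-1}(\Z[\xi_2])=\GL_{n-1}(\Z)$ and for $s=3$ it gives $\psi_j\colon\mathcal T_n\to\GL_{n-1}(\Z[\xi])$. To get items 1 and 2 I would note that $\GL_n(\Z)$ acts transitively on the non-zero homomorphisms $F_n\to\Z/s\Z$ (these are the non-zero vectors of $(\Z/s\Z)^n$, on which $\GL_n(\Z/s\Z)$ acts transitively) and $\aut(F_n)\twoheadrightarrow\GL_n(\Z)$; so for a fixed $s$ all the $K_\theta$ lie in one $\aut(F_n)$-orbit, and since $\mathcal T_n\triangleleft\aut(F_n)$, conjugation by a suitable $\sigma\in\aut(F_n)$ carries $\rho_\theta$ to a representation conjugate to the Grunewald--Lubotzky homomorphism $\rho_s$. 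Hence each image is, up to conjugacy in $\GL_{n-1}$, commensurable with $\SL_{n-1}(\Z[\xi_s])$; for $s=2$ a subgroup of $\GL_{n-1}(\Z)$ commensurable with $\SL_{n-1}(\Z)$ has finite index (the latter has index $2$), giving item 1, and item 2 is immediate.

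For item 3, suppose $\varphi$ is not iwip, and fix $m\ge 1$, a non-trivial proper free factor $H$ of rank $r$ (so $1\le r\le n-1$) and $w\in F_n$ with $\varphi^m(H)=wHw^{-1}$; write $F_n=H*L$ with $L\neq 1$ free. The construction is to choose $\theta$ with $\theta|_H=0$ and $\theta|_L$ onto (possible for any $s$ since $L\neq 1$). Then $K_\theta$ is normal and contains $H$ as a free factor, the module $K_\theta/K_\theta'$ contains the span $\bigoplus_{j\in\Z/s\Z}V_j$ of the images of the $s$ conjugates $x_n^{-j}Hx_n^{j}$ of $H$ lying in $K_\theta$ (each $V_j\cong H^{\mathrm{ab}}=\Z^r$), and $M_\theta$ splits as a direct sum of $\Z[\xi_s]$-modules $M_\theta=M_H\oplus M_L$ with $M_H:=\ker(\alpha_\theta|_{\oplus_jV_j})$ free of rank $r$ and $M_L$ free of rank $n-1-r$ over $\Z[\xi_s]$. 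The crucial point is that $\varphi^m$ preserves $\bigoplus_jV_j$: because $\varphi^m\in\mathcal T_n$ forces $\theta(\varphi^m(x_n))=\theta(x_n)$, the automorphism $\varphi^m$ merely permutes the $V_j$ cyclically by $\theta(w)$; and since $\varphi^m$ commutes with $\alpha_\theta$, it preserves $M_H$.

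Now split into cases. If $1\le r\le n-2$, take $s=2$: then $M_H$ has rank $r\ge 1$ and $M_\theta/M_H\cong M_L$ has rank $n-1-r\ge 1$ over $\Z$, so in a basis adapted to $M_H\subseteq M_\theta$ the matrix $\rho_\theta(\varphi^m)$ is block upper triangular with diagonal blocks of positive sizes $r$ and $n-1-r$, whence its characteristic polynomial is reducible. If $r=n-1$, take $s=3$: now $M_\theta=M_H$ has rank $n-1$ over $\Z[\xi]$ and the block decomposition is trivial, so I argue differently. Replacing $m$ by $3m$ if necessary we may assume $\theta(w)\equiv 0\pmod 3$, i.e.\ $w\in K_\theta$; then $\widehat g:=(c_{w^{-1}}\circ\varphi^m)|_H$ is an automorphism of $H\cong F_{n-1}$, and a direct computation — using that conjugation by $w\in K_\theta$ is trivial on $K_\theta/K_\theta'$ and that $\varphi^m$ fixes each $V_j$ — shows that in a suitable $\Z[\xi]$-basis of $M_H$ coming from a $\Z$-basis of $H^{\mathrm{ab}}$ the matrix of $\psi_\theta(\varphi^m)$ is exactly $\widehat g^{\mathrm{ab}}\in\GL_{n-1}(\Z)\subseteq\GL_{n-1}(\Z[\xi])$. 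In particular its characteristic polynomial, though a priori an element of $\Z[\xi][t]$, has rational integer coefficients, i.e.\ lies in $\Z[t]$. This establishes item 3.

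The main obstacle is precisely the case $r=n-1$: the degree-$2$ covers are genuinely insufficient there, since the block decomposition degenerates and one is left with an essentially arbitrary element of $\GL_{n-1}(\Z)$, which is why the degree-$3$ covers — and the less symmetric picture over $\Z[\xi]$ flagged in the introduction — must enter, and why the conclusion changes from ``characteristic polynomial reducible'' to ``characteristic polynomial in $\Z[t]$'' (the condition controlled by Proposition \ref{xi}). The other point demanding care is setting up the splitting $M_\theta=M_H\oplus M_L$ and verifying $\mathcal T_n$-invariance of $M_H$; this hinges on $\mathcal T_n$ acting trivially on $F_n^{\mathrm{ab}}$ and is exactly where the argument is special to $\mathcal T_n$ rather than all of $\aut(F_n)$.
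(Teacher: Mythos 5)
Your proposal is correct and follows essentially the same route as the paper: passing to the degree-$2$ and degree-$3$ cyclic covers $K_\theta$, using the Grunewald--Lubotzky theorem for items 1 and 2, and then observing that a non-iwip $\varphi$ leaves an invariant $\Z[\xi_s]$-submodule of $M_\theta$ coming from the preserved free factor $H$, with the $r\le n-2$ case handled by $s=2$ (reducible characteristic polynomial from block-triangular form) and the $r=n-1$ case by $s=3$ (integer matrix, hence characteristic polynomial in $\Z[t]$). The minor differences are cosmetic: you pass to $\varphi^{3m}$ only when $r=n-1$ and otherwise work with $\varphi^m$, observing directly that $\varphi^m$ permutes the $V_j$ cyclically, whereas the paper uniformly replaces $\varphi$ by $\varphi^{6m}$ so that the $K_s$-conjugacy class of $H$ is preserved for both $s=2,3$; and you make explicit the $\GL_n(\Z/s\Z)$-transitivity argument showing all $K_\theta$ for fixed $s$ are $\aut(F_n)$-conjugate, which the paper leaves implicit when invoking the Grunewald--Lubotzky theorem for each of the $s^n-1$ covers. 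Both are the same proof in substance.
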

\begin{proof} We use the notation of the previous section.
Assume that $\varphi\in\mathcal{T}_n$ is not iwip. Then there is a
free basis $x_1,\ldots,x_n$ of $F_n$ and two natural numbers $1 \le
l < n$ and $m \ge 1$ such that $\varphi^m$ preserve the
$F_n$-conjugacy class of $H:=\langle
x_1,\ldots,x_l\rangle$.\footnote{The fact that $\varphi^m$ preserves
$\langle x_1,\ldots,x_l\rangle$ does not imply that it also
preserves $\langle x_{l+1},\ldots,x_n\rangle$ so unlike the Torelli
group case in \cite{LuMe2} we cannot assume that $l<n-1$. This is
the reason why in the current paper we have to consider triple
covers and not only double covers.} Recall that for $s \ge 2$ we
defined $K_s$ to be the kernel of the homomorphism form $F_n$ to
$\Z/s\Z$ which sends $x_n$ to $1$ and $x_1,\ldots,x_{n-1}$ to $0$.
Thus, $K_s$ is a subgroup of index $s$ in $F_n$ so the
$F_n$-conjugacy class of $H$ splits into at most $s$ $K_s$-conjugacy
classes. Hence, $\varphi^{6m}$ preserves the $K_2$-conjugacy class
and the $K_3$-conjugacy class of $H$.

From now on let $s=2 \text{ or }3$ and recall that
$y_{k,i}:=x_n^{-i}x_kx_n^i$ and that $\xi_s$ is a non-trivial
$s$-root of unity. Since $\varphi\in \mathcal{T}_n$ and $F_n/K_s$ is
abelian,  $\varphi(x_n)x_n^{-1}\in K_s$ and
$$\varphi(y_{k,1})K_s'=x_n^{-1}\varphi(y_{k,0})x_nK_s'.$$

Since $\varphi^{6m}$ preserves the $K_s$-conjugacy class of $H$, for
every $1 \le k \le l$ there exists a word $w_k$ such that
$$\varphi^{6m}(y_{k,0})K_s'=w_k(y_{1,0},\ldots,y_{l,0})K_s'.$$
Thus,
$$\varphi^{6m}(y_{k,1})K_s'=w_k(y_{1,1},\ldots,y_{l,1})K_s'$$
and
\begin{equation}\label{eq101}
\varphi^{6m}(y_{k,0}y_{k,1}^{-1})K_s'=w_k(y_{1,0}y_{1,1}^{-1},\ldots,y_{l,0}y_{l,1}^{-1})K_s'.
\end{equation}

Recall that we defined $d_k:=y_{k,0}y_{k,1}^{-1} K_s' $. We also
showed that $K_s/K_s'$ is a $\Z[\xi_s]$-module and that $d_0,\ldots,
d_{n-1}$ freely generates a free $\Z[\xi_s]$-submodule
$L_sK_s'/K_s'$ of $K_s/K_s'$. Equation \ref{eq101} shows that
$\varphi^{6m}$ preserves the $\Z$-submodule generated by
$d_1,\ldots,d_l$. By the definition of $\rho_s$, the image
$\rho_s(\varphi^{6m}) \in \GL_{n-1}(\Z[\xi_s])$ represents the
action of $\varphi^{6m}$ on $L_sK_s'/K_s'$. This implies that at
least one of the following statements holds:
\begin{itemize}
\item $l<n-1$ and the characteristic
polynomial of $\rho_2(\varphi^{6m})$ is reducible.
\item $l=n-1$
and $\rho_3(\varphi^{6m})$ belongs to $\GL_{n-1}(\Z)$. In
particular, the characteristic polynomial of $\rho_3(\varphi^{6m})$
belongs to $\Z[t]$.\footnote{For every $\psi\in \mathcal{T}_n$,
$\rho_2(\psi)\in \GL_{n-1}(\Z)$ so we do not gain any new
information on $\rho_2(\varphi^{6m})$ if $l=n-1$.}
\end{itemize}

There are only $s^n-1$ normal subgroups of $F_n$ of index $s$. The
homomorphism $\rho_s$ depends on the subgroup $K_s$ and on the free
basis $d_1,\ldots,d_{n-1}$ of $L_sK_s'/K_s'$. However, the
characteristic polynomial of $\rho_s(\varphi^{6m})$ does not depend
on the choice of the basis so it is enough to take for each subgroup
of index $s$ just one homomorphism.
\end{proof}

In order to get a similar result for non-hyperbolic elements we will
need the following theorem:

\begin{thm}[Bestvina-Handel \cite{BH}]\label{BHT} If $\varphi \in \aut(F_n)$ is iwip but not hyperbolic then
there is $m \ge 1$ such that $\varphi^m$ is induced by an
automorphism of a compact surface $M$ with one boundary component
$S$.
\end{thm}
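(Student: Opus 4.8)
This is the Bestvina--Handel theorem, and I would prove it with their train track machinery, which I take as given. Since $\varphi$ is iwip, there is a train track representative $f\colon G\to G$ of $\varphi$: a homotopy equivalence of a finite connected graph $G$ with $\pi_1(G)\cong F_n$ and no vertex of valence one or two, carrying edges to immersed edge-paths, with $f^k$ locally injective on every edge for all $k\ge 1$; iwipness makes the transition matrix of $f$ irreducible. Let $\lambda\ge 1$ be its Perron--Frobenius eigenvalue. First I would rule out $\lambda=1$: in that case, after subdivision, $f$ is a simplicial automorphism of $G$, hence of finite order, so $\varphi^k=\id$ in $\out(F_n)$ for some $k\ge 1$, so $\varphi^k$ fixes the conjugacy class of every free factor and $\varphi$ is reducible --- contrary to hypothesis. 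Hence $\lambda>1$, and $\varphi$ carries an attracting lamination $\Lambda^+$ and a repelling lamination $\Lambda^-$, obtained by iterating $f^{\pm 1}$ on legal edges.

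Next I would use the failure of hyperbolicity to produce a periodic Nielsen circuit. Some power $\varphi^m$ fixes the conjugacy class of a nontrivial $w\in F_n$; since the conclusion only concerns a power of $\varphi$, replace $\varphi$ by $\varphi^m$. Let $\gamma$ be the cyclically reduced circuit in $G$ representing $[w]$; then $f_\#(\gamma)=\gamma$, so $\gamma$ is a Nielsen circuit. In the canonical decomposition of $\gamma$ into maximal legal segments and indivisible Nielsen paths (INPs) there can be no legal segment, since legal segments grow in length under $f_\#$ whereas $\gamma$ is fixed; so $\gamma$ is a cyclic concatenation of INPs joined at legal turns. Here I would invoke the structural fact special to the iwip case: $f$ has at most one INP up to inversion, and this $\rho$ has the form $\bar\alpha\beta$ with $\alpha,\beta$ legal paths based at the unique illegal turn of $\rho$, carried forward by $f_\#$ with a common legal cancelling extension. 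A short argument then shows that copies of $\rho^{\pm 1}$ cannot be chained endpoint-to-endpoint into a reduced circuit unless the two endpoints of $\rho$ coincide; so $\rho$ is a \emph{closed} INP and $\gamma$ is a power of it.

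The existence of a closed INP for an iwip automorphism is exactly Bestvina--Handel's criterion for being geometric (rather than atoroidal or parageometric), and from here I would run their construction: the closed INP $\rho$ together with $\Lambda^\pm$ can be organized, through a sequence of folds and blow-ups of the illegal turns, into a pseudo-Anosov homeomorphism $h$ of a compact surface $M$ (possibly non-orientable) with $\pi_1(M)\cong F_n$, carrying $\Lambda^+$ and $\Lambda^-$ as its invariant foliations and inducing $\varphi^m$ on $\pi_1(M)$. The bookkeeping that makes this work is the geometric index --- a sum of local contributions at the vertices of $G$ and at the singular leaves of the laminations which equals $\chi(M)$ exactly when the lamination fills a surface, the closed INP forcing it to be of surface type. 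Finally $M$ has exactly one boundary component: if it had two or more, one boundary curve would be a nontrivial proper free factor of $\pi_1(M)\cong F_n$ whose conjugacy class is preserved by a power of $h$, making $\varphi$ reducible; and the boundary word is then a root of the class $[w]$ we started from. The first two paragraphs are routine given the Bestvina--Handel toolkit; the genuine obstacle is the construction in this last paragraph --- controlling the folds and the geometric index so that one really obtains a surface with a single boundary circle --- and that is where essentially all of the work, and all of the lamination theory of \cite{BH}, is needed.
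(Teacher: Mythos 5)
The paper does not give a proof of this statement: it is quoted verbatim as a theorem of Bestvina and Handel \cite{BH} and used as a black box in the proof of Proposition \ref{hyperboilc}, so there is no argument of the authors to compare your sketch against. Your outline is a plausible roadmap through the train-track literature, and you are candid that the decisive step --- turning a closed indivisible Nielsen path into an actual homeomorphism of a compact surface with a single boundary circle, inducing $\varphi^m$ on $\pi_1$ --- is precisely where all of the real work lives and is being deferred back to \cite{BH}. Two anachronisms are worth flagging if you intend this as a faithful reconstruction: the attracting/repelling laminations $\Lambda^\pm$ and the geometric index you invoke are not part of the 1992 paper but of later work (Bestvina--Feighn--Handel for the former, Gaboriau--Levitt and collaborators for the latter); the original argument runs instead through stable train tracks, a uniqueness statement for indivisible Nielsen paths, and a hands-on construction of the surface from the train track graph with the closed Nielsen path becoming the boundary circle. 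These substitutions do not change the logical shape of what you wrote, but a careful write-up would need either to cite the modern toolkit honestly or to recast those steps in the original terms.
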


\begin{prop}\label{hyperboilc} There are homomorphisms
$\rho_1,\ldots,\rho_{2^n-1}:\mathcal{T}_n \rightarrow \GL_{n-1}(\Z)$
such that:
\begin{itemize}
\item[1.] For every $1 \le i \le 2^n-1$, $\rho_i(\mathcal{T}_n)$ is
of finite index in $\GL_{n-1}(\Z)$.
\item[2.] If $\varphi \in \mathcal{T}_n$ is iwip but not hyperbolic then there are
$m \ge 1$ and $1 \le i \le 2^n-1$ such that the characteristic
polynomial of  $\rho_i(\varphi^m)$ is reducible.
\end{itemize}
\end{prop}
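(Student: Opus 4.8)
The plan is to take for the $\rho_i$'s exactly the $\Z$-valued homomorphisms already constructed in Proposition~\ref{iwip}: recall that these are (up to conjugacy) the representations $\rho_K$ describing the action of $\mathcal{T}_n$ on $\ker\alpha_K=L_KK'/K'$, one for each of the $2^n-1$ index-two subgroups $K\le F_n$, produced as in Section~3 with $s=2$ (so $\Z[\xi_2]=\Z$ and $\rho_K\colon\mathcal{T}_n\to\GL_{n-1}(\Z)$). Then condition~1 is literally condition~1 of Proposition~\ref{iwip}: it is the $s=2$ case of the Grunewald--Lubotzky theorem, using that $\SL_{n-1}(\Z)$ has finite index in $\GL_{n-1}(\Z)$. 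So only condition~2 needs an argument, and the idea is to detect, inside one of the modules $\ker\alpha_K$, a nonzero vector fixed by a power of $\varphi$: such a vector forces $1$ to be an eigenvalue of the corresponding $\rho_K(\varphi^m)$, and a degree-$(n-1)$ integral characteristic polynomial with $n-1\ge2$ having the root $1$ is reducible over $\Z$.

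So let $\varphi\in\mathcal{T}_n$ be iwip but not hyperbolic. By Theorem~\ref{BHT} there are $m_0\ge1$ and a homeomorphism $h$ of a compact surface $M$ with one boundary circle $S$ inducing $\varphi^{m_0}$; fix $F_n=\pi_1(M,x_0)$ with $x_0\in S$, so that $h_*$ and $\varphi^{m_0}$ differ by an inner automorphism. Let $c\in F_n$ be the class of $S$; in an adapted surface basis $x_1,\dots,x_n$ one has $c=[x_1,x_2]\cdots[x_{n-1},x_n]$ if $M$ is orientable and $c=x_1^2\cdots x_n^2$ otherwise. I will use: (i) the image of $c$ in $H_1(F_n;\Z/2)$ vanishes, so $c$ belongs to \emph{every} index-two subgroup of $F_n$; and (ii) $h(S)=S$, so $h_*$, hence $\varphi^{m_0}$, fixes the conjugacy class of $c^{\pm1}$, whence $\varphi^{2m_0}(c)=wcw^{-1}$ for some $w\in F_n$. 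Now put $\gamma:=x_1$, let $K:=\ker\chi$ with $\chi(x_1)=1$ and $\chi(x_j)=0$ for $j\ge2$, and let $\delta$ denote the image of the commutator $[c,\gamma]=c\gamma c^{-1}\gamma^{-1}$ in $K/K'$. Since $[c,\gamma]\in F_n'\subseteq K$, we have $\delta\in\ker\alpha_K=L_KK'/K'$, i.e.\ $\delta$ lies in the rank-$(n-1)$ free $\Z$-module on which $\rho_K$ acts, and a direct Reidemeister--Schreier computation (Schreier transversal $\{1,\gamma\}$) shows $\delta\ne0$. (Geometrically $\delta=[S_1]-[S_2]$ is the difference of the classes of the two boundary circles of the double cover of $M$ attached to $K$, the loop $c$ lifting to a loop by (i).)

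To finish I must show that a power of $\varphi$ fixes $\delta$. Write $\delta=cK'-(\gamma c\gamma^{-1})K'=cK'-\sigma(cK')$, where $\sigma$ is the involution of $K/K'$ induced by conjugation by any element of $F_n\setminus K$. Using that $\varphi^{2m_0}$ preserves $K$, that $\varphi^{2m_0}(c)=wcw^{-1}$, and that $\varphi^{2m_0}(\gamma)\equiv\gamma$ modulo $F_n'\subseteq K$, a short computation gives $\varphi^{2m_0}(cK')=\sigma^{\varepsilon}(cK')$ and $\varphi^{2m_0}\bigl(\sigma(cK')\bigr)=\sigma^{1-\varepsilon}(cK')$, with $\varepsilon=0$ if $w\in K$ and $\varepsilon=1$ otherwise; hence $\varphi^{2m_0}(\delta)=(-1)^{\varepsilon}\delta$, and so $\varphi^{4m_0}(\delta)=\delta$. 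Therefore $\rho_K(\varphi^{4m_0})\in\GL_{n-1}(\Z)$ fixes the nonzero vector $\delta$, its characteristic polynomial has the root $1$, and being of degree $n-1\ge2$ it is reducible in $\Z[t]$. This gives condition~2, with $m=4m_0$ and $\rho_i=\rho_K$.

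The step I expect to be the real obstacle is the non-vanishing of $\delta$ in $K/K'$ (equivalently $[c,\gamma]\notin K'$): this is where the explicit shape of the boundary word $c$ is used, and it must be done uniformly, since Theorem~\ref{BHT} permits $M$ to be non-orientable — when it is, the orientation double cover is useless (there $[S_1]-[S_2]=0$), so one is forced to take $K\ne\ker w_1$ and to work with $[c,\gamma]$ rather than with $c$ itself, which is exactly why the commutator enters. The remaining ingredients (the shape of $c$, the Reidemeister--Schreier basis of $K$, and the sign bookkeeping relating the inner-automorphism ambiguity between $\varphi^{m_0}$ and $h_*$ to the deck involution $\sigma$) are routine but need to be carried out with some care.
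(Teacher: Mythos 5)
Your proof is correct and takes essentially the same route as the paper: invoke Bestvina--Handel, observe that the boundary word $c$ lies in every index-two subgroup, and then manufacture inside $\ker\alpha=L_KK'/K'$ a nonzero vector fixed by a power of $\varphi$, forcing $1$ to be an eigenvalue of $\rho_K(\varphi^m)$ and hence the degree-$(n-1)\ge 2$ characteristic polynomial to be reducible. The paper does the two surface types separately with a slightly different index-two subgroup ($\chi(x_n)=1$, $\chi(x_j)=0$ for $j<n$) and fixed vector: in the orientable case it uses $cK_2'=d_{n-1}$ directly, and in the non-orientable case (where $cK_2'\notin\ker\alpha$) it uses $c\,\sigma(c)^{-1}K_2'=d_1\cdots d_{n-1}$, the square of the product of the basis vectors. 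Your choice of $\delta=[c,\gamma]K'=cK'-\sigma(cK')$ with $\gamma=x_1$ and $K=\ker\chi$, $\chi(x_1)=1$, handles both cases uniformly, which is a small but genuine simplification; your remark that one must avoid $K=\ker w_1$ in the non-orientable case (since there $\delta$ vanishes) is also correct, and is implicitly respected by your choice since $w_1(x_j)=1$ for all $j$ while $\chi(x_2)=0$. I checked the Reidemeister--Schreier non-vanishing of $\delta$ in both cases (with transversal $\{1,x_1\}$ one finds $\delta=2([c_2]-[b_2])$ in the orientable case and $\delta=2\sum_{j\ge 2}([b_j]-[c_j])$ in the non-orientable case), and your sign bookkeeping via the deck involution $\sigma$ to get $\varphi^{4m_0}(\delta)=\delta$ is sound; so the argument is complete.
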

\begin{proof}
We use the notation of the previous section. Let $\varphi \in
\mathcal{T}_n$ be iwip but not hyperbolic. Theorem \ref{BHT} implies
that for some $m \ge 1$, the automorphism $\varphi^m$ is induced by
an automorphism of a compact surface $M$ with one boundary component
$S$.  Thus, $\varphi^m$ sends the homotopic class of $S$ to itself
or to its inverse, so $\varphi^{2m}$ sends the homotopic class of
$S$ to itself. We divide the proof into two cases.
\\
{\bf First case: $M$ is orientable.} In that case $n$ is even and
there exists a free basis $x_1,\ldots,x_n$ of $F_{n}$ such that
$\varphi^{2m}\in \mathcal{T}_n$ preserves the $F_n$-conjugacy class
of $[x_1,x_2]\cdots[x_{n-1},x_n]$. Then, $\varphi^{4m}\in
\mathcal{T}_n$ preserves $K_2$-conjugacy class of
$[x_1,x_2]\cdots[x_{n-1},x_n]$. In particular as
$x_1,\ldots,x_{n-2}\in K_2$ and $d_{n-1}=[x_{n-1},x_n]K_2'$,
$\rho_2(\varphi^{4m})(d_{n-1})=d_{n-1}$. Thus, the characteristic
polynomial of $\rho_2(\varphi^{4m})$ is reducible.
\\
{\bf Second case: $M$ is not orientable.} There exists a free basis
$x_1,\ldots,x_n$ of $F_{n}$ such that $\varphi^{2m}\in
\mathcal{T}_n$ preserves the $F_n$-conjugacy class of of
$x_1^2\cdots x_n^2$. Then, $\varphi^{4m}\in \mathcal{T}_n$ preserves
the $K_2$-conjugacy class of $x_1^2\cdots x_n^2$. It also preserves
the $K_2$-conjugacy class of $x_n^{-1}(x_1^2\cdots x_n^2)x_n$. This
implies that $\rho_2(\varphi^{4m})(d^2)=d^2$ where $d:=d_1\cdots
d_{n-1}$, so the characteristic polynomial of $\rho_2(\varphi^{4m})$
is reducible.

As in the proof of Proposition \ref{iwip} the irreducibility of
$\rho_2(\varphi^{4m})$ depends only on the subgroup $K_2$ and not on
the specific basis of $F_n$. Thus, the number of required
homomorphisms follows from the fact that the are $2^n-1$ subgroups
of index 2.
\end{proof}
By using the large sieve method, Rivin proved:
\begin{prop}[Rivin, \cite{Ri1}]\label{Rir} Fix $n \ge 2$ and let $\Gamma$ be a
subgroup of finite index in $\GL_n(\Z)$. For every $g \in \Gamma$
let $f_g$ be the characteristic polynomial of $g$. Then, the set
$$\{g \in \Gamma \mid \exists m \ge 1 \text{ such that } \ f_{g^m} \text{is reducible}\}$$
is exponentially small.
\end{prop}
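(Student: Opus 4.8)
The plan is to bound the bad set
\[
T:=\{g\in\Gamma\mid \exists m\ge1,\ f_{g^m}\text{ is reducible over }\Q\}
\]
by the union of two sets handled by two different tools: a Zariski--closed set, to which Proposition \ref{prop variety} applies, and the set of $g$ with $f_g$ itself reducible over $\Q$, to which the large sieve applies (this is where Rivin's original argument \cite{Ri1} and Kowalski's sieve \cite{Ko} enter). The first step is a Galois--theoretic reduction. Write $\phi$ for Euler's totient function and put $B_n:=\mathrm{lcm}\{d\ge1\mid \phi(d)\le n^2\}$, a well--defined positive integer since $\phi(d)\to\infty$. I claim
\[
T\ \subseteq\ \{g\in\Gamma\mid f_g\text{ is reducible over }\Q\}\ \cup\ \{g\in\Gamma\mid \disc(f_{g^{B_n}})=0\}.
\]
Indeed, suppose $g\in\Gamma$ with $f_g$ irreducible over $\Q$ and $\disc(f_{g^{B_n}})\ne0$, and let $\lambda_1,\dots,\lambda_n$ be the roots of $f_g$ (pairwise distinct, since $f_g$ is separable). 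If $\lambda_i^m=\lambda_j^m$ for some $m\ge1$ and $i\ne j$, then $\lambda_i/\lambda_j$ is a root of unity lying in $\Q(\lambda_i,\lambda_j)$, a field of degree at most $n^2$ over $\Q$; hence it is a primitive $d$-th root of unity with $\phi(d)\le n^2$, so $d\mid B_n$ and $\lambda_i^{B_n}=\lambda_j^{B_n}$, contradicting $\disc(f_{g^{B_n}})\ne0$. Thus for every $m\ge1$ the elements $\lambda_1^m,\dots,\lambda_n^m$ are pairwise distinct, so they are precisely the roots of $f_{g^m}$ and $\lambda_i\mapsto\lambda_i^m$ is a $\Gal(f_g)$-equivariant bijection onto them; since $\Gal(f_g)$ acts transitively on $\{\lambda_i\}$ it acts transitively on the roots of $f_{g^m}$, whence $f_{g^m}$ is irreducible over $\Q$ for all $m$. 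Taking contrapositives proves the claim (and a repeated root of $f_g$ is already absorbed in the second set, as $\disc(f_g)=0$ forces $\disc(f_{g^{B_n}})=0$).

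Next, the variety term. The set $V(\C):=\{A\in\GL_n(\C)\mid \disc(f_{A^{B_n}})=0\}$ is a subvariety defined over $\Z$. Since $\Gamma$ has finite index in $\GL_n(\Z)$, the connected component of its Zariski closure is $\SL_n(\C)$, which is semisimple; and $V(\C)$ contains no coset of $\SL_n(\C)$, because a generic diagonal matrix of the appropriate determinant ($\pm1$) lies in any prescribed coset and has separable $B_n$-th power. Hence $V(\C)\cap\Gamma$ is exponentially small by Proposition \ref{prop variety}.

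Finally, the reducibility term, which is the heart of the matter. Let $R:=\{g\in\Gamma\mid f_g\text{ reducible over }\Q\}$. For $g\in R$ with $\disc(f_g)\ne0$, factor $f_g=\prod_{i=1}^r f_i$ over $\Z$ with $r\ge2$ and $\deg f_i<n$; then for every prime $p\nmid\disc(f_g)$ the polynomial $f_g\bmod p$ is separable and factors into polynomials of degree $<n$, so the image of $g$ in $\GL_n(\F_p)$ avoids the subset $\Omega_p$ of elements whose characteristic polynomial is irreducible over $\F_p$. A standard count --- the number of monic irreducible degree-$n$ polynomials over $\F_p$ times the index of a maximal nonsplit torus, via Lang's theorem --- gives $|\Omega_p|\ge c_n|\GL_n(\F_p)|$ with $c_n>0$ independent of $p$, and the same lower bound inside $\SL_n(\F_p)$, which is the relevant congruence quotient since $\Gamma$ surjects onto $\SL_n(\F_p)$ for almost all $p$ by strong approximation. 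Since $\Gamma$ has finite index in $\GL_n(\Z)$ it has property-$\tau$ with respect to its congruence subgroups (Selberg's $\tfrac{3}{16}$-theorem for $n=2$, Kazhdan's property $(T)$ for $n\ge3$), so Kowalski's large sieve \cite{Ko} applies; using that $|\disc(f_{w_k})|\le e^{O(k)}$ (the entries of $w_k$ grow at most exponentially in $k$), so that this integer is divisible by only $O(1)$ of the $\gg e^{\delta k}/k$ primes in a window $[e^{\delta k},2e^{\delta k}]$, the sieve inequality yields $\prob(w_k\in R)\ll k\,e^{-\delta k}$. Together with the two preceding steps, $T$ is exponentially small.

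The main obstacle is this last step: one must simultaneously establish the uniform lower bound $|\Omega_p|\gg|\SL_n(\F_p)|$ (requiring the classification of conjugacy classes of maximal tori and the Lang-theorem count of matrices with a prescribed characteristic polynomial) and assemble the sieve bookkeeping --- property-$\tau$, strong approximation onto the congruence quotients, and the elementary fact that an integer of size $e^{O(k)}$ has boundedly many prime divisors in each dyadic range. The Galois reduction and the variety argument are routine by comparison.
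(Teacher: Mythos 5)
The paper does not prove Proposition~\ref{Rir}; it cites it as Rivin's result from~\cite{Ri1}. Your reconstruction is correct and, to the best of my knowledge, follows the same line as Rivin's original argument, and it also mirrors the techniques the paper itself uses elsewhere: your Galois reduction (replacing the quantifier ``$\exists m$'' by a single bounded power $B_n$ via the observation that a ratio of eigenvalues which is a root of unity has order $d$ with $\phi(d)\le n^2$) is exactly the mechanism of the paper's Lemma~\ref{129}, where the bound $(2n)!$ on the degree plays the role of your $n^2$; and your invocation of Proposition~\ref{prop variety} on the discriminant locus is the same move the paper makes in Lemmas~\ref{128} and~\ref{129}. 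The one place the writeup is compressed is the final sieve step: sieving only over primes $p\nmid\disc(f_{w_k})$ requires either the second-moment (concentration) form of the group large sieve --- one shows $N(w_k):=\#\{p\in\mathcal P: w_k\bmod p\in\Omega_p\}$ concentrates around $c_n|\mathcal P|$ while for $w_k\in R$ one has $N(w_k)=O(1)$ --- rather than the plain large-sieve inequality, but you have all the ingredients in place (property-$\tau$, strong approximation, the uniform bound $|\Omega_p|\gg|\SL_n(\F_p)|$, and the $O(1)$ bound on prime divisors of $\disc(f_{w_k})$ in an exponential window), so this is a presentation issue, not a gap.
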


We can now conclude:
\begin{thm}\label{main thm} Let $n \ge 3$ and denote $\mathcal{T}_n:=\ker (\aut(F_n)\rightarrow \aut(\Z_n))$. Then the set $Z$ consisting
the elements of $\mathcal{T}_n$ which are either non-iwip or
non-hyperbolic is exponentially small.
\end{thm}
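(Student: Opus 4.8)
The plan is to combine the structural results of Section 4 (Propositions \ref{iwip} and \ref{hyperboilc}) with the exponential-smallness statements of Sections 2 and 3 (Propositions \ref{xi} and \ref{Rir}) via a standard pull-back argument. Write $Z = Z_{\text{niwip}} \cup Z_{\text{nhyp}}$, where $Z_{\text{niwip}}$ is the set of non-iwip elements of $\mathcal{T}_n$ and $Z_{\text{nhyp}}$ the set of non-hyperbolic elements. Since a finite union of exponentially small sets is exponentially small, it suffices to treat each piece. For the non-hyperbolic elements one further splits $Z_{\text{nhyp}} = (Z_{\text{nhyp}} \cap Z_{\text{niwip}}) \cup (Z_{\text{nhyp}} \setminus Z_{\text{niwip}})$; the first set is contained in $Z_{\text{niwip}}$, and the second consists of elements that are iwip but not hyperbolic, which is exactly the situation handled by Proposition \ref{hyperboilc}. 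So the theorem reduces to showing that each of the two sets
\[
A := \{\varphi \in \mathcal{T}_n \mid \varphi \text{ is not iwip}\}, \qquad
B := \{\varphi \in \mathcal{T}_n \mid \varphi \text{ is iwip but not hyperbolic}\}
\]
is exponentially small.

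First I would handle $B$. By Proposition \ref{hyperboilc} there are homomorphisms $\rho_1,\dots,\rho_{2^n-1}\colon \mathcal{T}_n \to \GL_{n-1}(\Z)$ with $\rho_i(\mathcal{T}_n)$ of finite index in $\GL_{n-1}(\Z)$, such that every $\varphi \in B$ satisfies: for some $m\ge 1$ and some $i$, the characteristic polynomial of $\rho_i(\varphi^m)$ is reducible. In other words $B \subseteq \bigcup_{i=1}^{2^n-1}\rho_i^{-1}(R_i)$, where $R_i := \{g \in \rho_i(\mathcal{T}_n) \mid \exists m\ge 1,\ f_{g^m} \text{ reducible}\}$. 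By Proposition \ref{Rir} (applied with $\Gamma = \rho_i(\mathcal{T}_n)$, a finite-index subgroup of $\GL_{n-1}(\Z)$, noting $n-1\ge 2$) each $R_i$ is exponentially small in $\rho_i(\mathcal{T}_n)$, and the preimage of an exponentially small set under a surjective homomorphism is exponentially small (this is the standard fact that property-$\tau$/exponential-smallness pulls back along quotients — the random walk pushes forward to a random walk on the image with an admissible generating set). Hence each $\rho_i^{-1}(R_i)$ is exponentially small, and so is $B$.

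For $A$ the argument is the same in spirit but uses both families of representations. Proposition \ref{iwip} gives homomorphisms $\rho_1,\dots,\rho_{2^n-1}\colon\mathcal{T}_n\to\GL_{n-1}(\Z)$ and $\psi_1,\dots,\psi_{3^n-1}\colon\mathcal{T}_n\to\GL_{n-1}(\Z[\xi])$ with the stated properties, such that every non-iwip $\varphi$ satisfies, for some $m$ and some $i,j$, that $f_{\rho_i(\varphi^m)}$ is reducible or $f_{\psi_j(\varphi^m)} \in \Z[t]$. Thus $A$ is contained in the union of the $\rho_i^{-1}(R_i)$ and the $\psi_j^{-1}(T_j)$, where $R_i$ is as before and $T_j := \{g \in \psi_j(\mathcal{T}_n) \mid \exists m\ge 1,\ f_{g^m}\in\Z[t]\}$. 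The sets $R_i$ are exponentially small by Proposition \ref{Rir} as above. For the $T_j$: by part 2 of Proposition \ref{iwip}, $\psi_j(\mathcal{T}_n)$ is commensurable with $\SL_{n-1}(\Z[\xi])$, so Proposition \ref{xi} (with $\Gamma = \psi_j(\mathcal{T}_n)$, $n$ replaced by $n-1$) applies and $T_j$ is exponentially small in $\psi_j(\mathcal{T}_n)$. Pulling back and taking the finite union, $A$ is exponentially small, completing the proof.

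I do not expect a genuine obstacle here — the theorem is essentially an assembly of the pieces already proved. The one point that deserves care, and which I would state explicitly as a lemma (or cite), is that exponential smallness is inherited under preimages along surjective homomorphisms: if $\phi\colon\Gamma\twoheadrightarrow\Delta$ and $Y\subseteq\Delta$ is exponentially small with respect to every admissible generating set of $\Delta$, then $\phi^{-1}(Y)$ is exponentially small in $\Gamma$, because for any admissible generating set $\Sigma$ of $\Gamma$ the image $\phi(\Sigma)$ is an admissible (symmetric, non-bipartite Cayley graph) generating multiset of $\Delta$ and $\prob_\Sigma(w_k\in\phi^{-1}(Y)) = \proba_{\phi(\Sigma)}(w_k\in Y)$. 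The only mild subtlety is that $\phi(\Sigma)$ may be a multiset and its Cayley graph could in principle be bipartite; but one checks that bipartiteness of $\Cay(\Delta,\phi(\Sigma))$ would force bipartiteness of $\Cay(\Gamma,\Sigma)$, contradicting admissibility of $\Sigma$. With this in hand the rest is the bookkeeping above.
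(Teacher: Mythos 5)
Your proof is correct and takes essentially the same route as the paper, which dispatches the theorem in one line by citing Propositions \ref{iwip}, \ref{hyperboilc}, \ref{xi} and \ref{Rir}; you have simply made explicit the decomposition $Z=A\cup B$ and the pull-back of exponential smallness along the representations $\rho_i$ and $\psi_j$. The multiset/bipartiteness subtlety you flag in the pull-back lemma is real but standard (and handled in the cited sieve machinery of \cite{LuMe1}, \cite{LuMe2}), and your resolution of it is correct.
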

\begin{proof} This follows from Propositions \ref{iwip} and
 \ref{hyperboilc}, using Propositions \ref{xi} and \ref{Rir}.
\end{proof}

\end{document}